\newtheorem{thm}{Theorem}
\newtheorem{lem}[thm]{Lemma}
\newcommand{\T}{\mathscr{T}}
\DeclareMathOperator{\dom}{\mathrm{dom}}
\newcommand{\restr}{\!\upharpoonright\!}
\newcommand{\abs}[1]{\left\lvert{#1}\right\rvert}
\newcommand{\paren}[1]{\left({#1}\right)}
\newcommand{\bdef}[1]{\emph{#1}}
\begin{document}
\title{A note on the indivisibility of the Henson graphs}
\author[K.\ Gill]{Kenneth Gill}
\date{October 30, 2023}
\email{gillmathpsu@posteo.net}
\maketitle

{\let\thefootnote\relax
\footnotetext{This work is part of the author's Ph.D.\ dissertation at
  Penn State University \cite{mythesis}.} 

\begin{abstract}
  We show that in contrast to the Rado graph, the Henson graphs are not
  computably indivisible.
\end{abstract}

\bigskip

\section{Introduction}

The Rado graph is, up to graph isomorphism, the unique countable undirected
graph that satisfies the following property: if $A$ and $B$ are any finite
disjoint sets of vertices, there is a vertex not in $A$ or $B$ which is
connected to every member of $A$ and to no member of $B$. It is homogeneous and
universal for the class of finite graphs.

Our interest here lies with the closely related family of \bdef{Henson graphs},
introduced by C.\ Ward Henson in 1971 \cite{Hen71}. For each $n\geq3$, the
Henson graph $H_n$ is up to isomorphism the unique countable graph which
satisfies the following property analogous to that characterizing the Rado
graph: for any finite disjoint sets of vertices $A$ and $B$, if $A$ does not
contain a copy of $K_{n-1}$, then there is a vertex $x\notin A\cup B$ connected
to every member of $A$ and to no member of $B$. (Here we write $K_m$ for the
complete graph on $m$ vertices.) The graph $H_n$ is homogeneous and universal for
the class of $K_n$-free finite graphs.

We presume familiarity with the basic terminology of computable structure
theory, as for example in the first chapter of \cite{Mon1}. A structure
$\mathcal{S}$ is said to be \bdef{indivisible} if for any presentation
$\mathcal{A}$ of $\mathcal{S}$ and any coloring $c$ of $\dom \mathcal{A}$ with
finite range, there is a monochromatic subset of $\dom \mathcal{A}$ which
induces a substructure isomorphic to $\mathcal{S}$. We call the monochromatic
subset in question a \bdef{homogeneous set} for $c$. $\mathcal{S}$ is
\bdef{computably indivisible} if there is a homogeneous set computable from
$\mathcal{A}$ and $c$, for any presentation $\mathcal{A}$ and coloring $c$ of
$\dom \mathcal{A}$.

For the rest of the paper, we fix a computable presentation of $H_n$ with domain
$\mathbb{N}$ and thus focus only on the coloring. Viewed as a structure in the
language of a single binary relation, the Rado graph is known to be indivisible,
and computably so (folklore). Each of the Henson graphs is also indivisible.
Henson himself proved that a weak form of indivisibility holds for each $H_n$.
Full indivisibility was first shown for $n=3$ by Komj\'ath and R\"odl
\cite{KR86}, and then for all $n$ by El-Zahar and Sauer \cite{ElZS89}. (A
clarified and corrected version of the proof of Komj\'ath and R\"odl can be
found in \cite{mythesis}.) Work on the Ramsey theory of the Henson graphs has
progressed beyond vertex colorings; recently, Natasha Dobrinen has undertaken a
deep study of the structure of $H_n$ and shown that for each $n$, $H_n$ has
finite big Ramsey degrees, developing many novel techniques in the process
\cite{DobH3,DobHn}.

Our far more modest result concerns only vertex colorings and states that
unlike the Rado graph, none of the Henson graphs is computably indivisible:

\begin{thm}\thlabel{Hn-noce} For every $n\geq3$, there is a computable 2-coloring
  of $H_n$ with no c.e.\ homogeneous set.
\end{thm}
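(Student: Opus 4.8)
The plan is to build $c$ by a priority argument. Call $S\subseteq\mathbb N$ \emph{rich} if it induces a copy of $H_n$; since a homogeneous set for $c$ is exactly a monochromatic rich set and richness of $W_e$ does not depend on $c$, it suffices to build a computable $c$ so that \emph{every rich $W_e$ is non-monochromatic under $c$}. Two facts about $H_n$ drive the construction. First, $S$ is rich iff it is infinite and, as an induced subgraph, has the extension property of the introduction; so any rich $S$ contains, for every $K_{n-1}$-free finite $A\subseteq S$ and finite $B\subseteq S$ disjoint from $A$, infinitely many vertices adjacent to all of $A$ and to none of $B$, and in particular $S$ contains a $K_{n-1}$. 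Second --- the crux --- $H_n$ is \emph{not} the union of finitely many $K_{n-1}$-free induced subgraphs: for $n=3$ this is just the unboundedness of the chromatic number over triangle-free graphs, while for $n\geq4$ it follows from the universality of $H_n$ together with the existence, for each $r$, of a $K_n$-free finite graph every $r$-colouring of whose vertices has a monochromatic $K_{n-1}$ (a vertex-Folkman theorem and its iterates). Consequently, if $W$ is rich and $F_1,\dots,F_m$ are $K_{n-1}$-free, then $W\setminus(F_1\cup\dots\cup F_m)$ still contains a $K_{n-1}$.

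The coloring is produced by \emph{peeling}. One maintains a descending chain $\mathbb N=U_0\supseteq U_1\supseteq\cdots$ of computable rich sets. At step $\ell$ one picks a $K_{n-1}$, say $\{w_1,\dots,w_{n-1}\}\subseteq U_\ell$, sets $c(x)=1$ for every $x\in U_\ell$ adjacent to $w_1$ and $c(x)=0$ for every $x\in U_\ell$ adjacent to all of $w_2,\dots,w_{n-1}$, and lets $U_{\ell+1}=\{x\in U_\ell:x\not\sim w_1\text{ and }x\not\sim w_i\text{ for some }i\geq2\}$ be what remains. The two coloured sets are disjoint, since a common neighbour of all of $\{w_1,\dots,w_{n-1}\}$ would complete a $K_n$; each is $K_{n-1}$-free; and $U_{\ell+1}$ is again computable and rich, as one sees by applying the extension property of $U_\ell$ to any pair $(A,B)$ enlarged to $(A,B\cup\{w_1,w_2\})$. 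Two features are decisive. The chosen clique is automatically bichromatic: $w_1$ gets $0$, while $w_2,\dots,w_{n-1}$ (there is at least one of these, as $n\geq3$) get $1$. And already at the first step, where $U_0=\mathbb N$, the class $C_0$ contains $w_1$ but no neighbour of $w_1$, and $C_1$ contains the $K_{n-1}$-free set $\{w_2,\dots,w_{n-1}\}$ but no vertex adjacent to all of it, so neither $C_0$ nor $C_1$ has the extension property and hence neither is rich.

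Now the diagonalization. One dedicates step $e$ to requirement $R_e$: there one waits for $W_e$ to enumerate a $K_{n-1}$ lying inside the current $U_e$ and peels with that clique, which by the bichromaticity above makes any rich $W_e$ non-monochromatic. The structural fact is exactly what makes this wait terminate when $W_e$ is rich: at every stage the complement of $U_e$ is a finite union of the $K_{n-1}$-free peeled pieces, so $W_e\cap U_e$ still contains a $K_{n-1}$, which $W_e$ eventually enumerates. To keep $c$ total one interleaves ``filler'' steps that peel with a $K_{n-1}$ through the least remaining vertex, so that the chain exhausts $\mathbb N$; since step $e$ may have to be revised when an earlier step switches from its filler clique to its requirement's clique, the construction is finite-injury. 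Verifying that each step settles, that $c$ is total and computable, and that each $R_e$ is met --- noting that the self-referential cases $W_e=C_i$ are handled not by $R_e$, whose elements are never uncoloured when enumerated, but by the first step --- is then routine, and the theorem follows.

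I expect the genuine obstacle to be the structural lemma that $H_n$ is not covered by finitely many $K_{n-1}$-free induced subgraphs; for $n\geq4$ this really does rest on (iterated) vertex-Folkman numbers rather than anything elementary. After that, the remaining work is the bookkeeping that lets each requirement-step capture a clique of $W_e$ inside its own residual while the filler steps keep the peeling exhaustive, together with the verification that only finitely many revisions occur.
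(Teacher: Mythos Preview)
Your structural lemma --- that no rich set is covered by finitely many $K_{n-1}$-free induced subgraphs --- is correct and is exactly the Folkman-based fact the paper isolates. The gap is in how you meet an individual $R_e$. Your plan is to wait until $W_{e,s}$ shows a $K_{n-1}$ lying in the current uncoloured region and then peel with that clique, making it bichromatic. But for $c$ to be total and computable, every vertex must receive its colour at a definite finite stage; the filler peels you invoke are precisely what force this. Now the adversary can enumerate $W_e$ slowly --- e.g., put $v$ into $W_e$ only after $v$ has already been coloured --- so that every $K_{n-1}$ you ever see in $W_{e,s}$ consists of already-coloured vertices. Your structural lemma guarantees that $W_e\cap U_e$ contains a $K_{n-1}$, but not that $W_{e,s}$ contains one inside the stage-$s$ uncoloured region for some $s$, and that is the statement you actually need. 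Calling the construction finite-injury does not close this gap: when a step ``switches from its filler clique to its requirement's clique,'' the colours already laid down by the old filler (and by all later provisional peels) are permanent, so there is no mechanism ensuring the requirement's clique receives both colours.

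The paper sidesteps this race by acting forward rather than waiting. When a follower $x\in W_e$ appears, the construction immediately \emph{reserves} the opposite colour for all of $x$'s not-yet-coloured neighbours. Any later vertex of $W_e$ adjacent to $x$ that nonetheless matches $W_e$'s colour must therefore have been protected by a stronger requirement, hence lies in one of boundedly many neighbour sets. Folkman is then used not to locate a bichromatic clique in $W_e$ but to choose a connected target graph $\Gamma_e$ that cannot embed into that bounded union of neighbour sets, so that $W_e$ provably fails the one-point extension property. The reservation mechanism --- colouring the \emph{neighbours} of what $W_e$ shows, rather than waiting to colour elements of $W_e$ itself --- is the missing idea in your sketch.
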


This theorem naturally raises the question of how complicated a homogeneous set
for a coloring of $H_n$ can or must be. An analysis of the proof of the
indivisibility of $H_3$ by Komj\'ath and R\"odl in \cite{KR86} demonstrates that
a homogeneous set can always be computed in the first jump of the coloring. For
$H_n$ in general, the proof of El-Zahar and Sauer in \cite{ElZS89} shows that
the $(2n-3)$rd jump of a coloring suffices to compute a homogeneous set. The
latter is a strictly worse upper bound for $n=3$, and it is currently unknown
whether a similar discrepancy exists for any $n\geq 4$. Where vertex colorings
of $H_n$ fall on the spectrum of coding vs.\ cone avoidance is another
intriguing question.


\section{Proof of the theorem}

Write $x\in G$, for a graph $G$, to mean $x$ is a vertex of $G$. By abuse of
notation, if $V\subset G$ is any set of vertices, we will identify $V$ with the
induced subgraph of $G$ on $V$. Furthermore, we always identify natural numbers
with the elements of $H_n$ they encode via our fixed computable presentation of
$H_n$, and sets of naturals with the corresponding induced subgraphs of $H_n$.
If $A=\{a_1<\cdots <a_n\}$ and $B=\{b_1<\cdots < b_n\}$ are two sets of vertices
in a graph $G$, write $A\simeq^\ast B$ if the map $a_i\mapsto b_i$ is an
isomorphism of induced subgraphs. If the vertices of $G$ are given some linear
ordering, denote by $G \restr m$ the induced subgraph of $G$ on its first $m$
vertices. If $x\in G$, let $G(x)$ denote the induced subgraph of $G$ consisting
of the neighbors of $x$. A set of the form $G(x)$ is referred to as a ``neighbor
set''. Let $\T_n$ be the set of finite $K_n$-free simple connected graphs. 

We will need two lemmas. The first is a consequence of the following theorem of
Jon Folkman, which appears as Theorem~2 in \cite{F70}. For a graph $G$, let
$\delta(G)$ be the largest $n$ such that $G$ contains a subgraph isomorphic to
$K_n$.

\begin{thm}[Folkman]\thlabel{Folkman} For each $k>0$ and finite graph $F$, there
  is a finite graph $G$ such that
  \begin{enumerate}[label=(\alph*)]
    \item $\delta(G) = \delta(F)$, and
    \item for any partition of the vertices of $G$ as $G_1 \sqcup \cdots \sqcup
      G_k$, there is an $i$ such that $G_i$ contains a subgraph isomorphic to
      $F$.
  \end{enumerate}
\end{thm}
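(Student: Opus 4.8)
The plan is to separate the theorem's two requirements --- the partition property (b) and the clique bound (a) --- and to reduce to two more tractable situations. Throughout, write $G\to(H)^1_j$ to mean that every partition of the vertices of $G$ into $j$ classes has a class containing a subgraph isomorphic to $H$, so that clause (b) reads $G\to(F)^1_k$. First I would remove the dependence on $k$, granting the theorem for $k=2$: given $F$, obtain $G_1$ with $\delta(G_1)=\delta(F)$ and $G_1\to(F)^1_2$, then apply the $k=2$ case with $G_1$ in place of $F$ to get $G_2$ with $\delta(G_2)=\delta(G_1)$ and $G_2\to(G_1)^1_2$, and continue; all the $G_j$ have the same $\delta$ as $F$, and $G_j\to(F)^1_{2^j}$ by induction on $j$ (coarsen a $2^j$-coloring of $V(G_j)$ to two colors, use $G_j\to(G_{j-1})^1_2$ to find a copy of $G_{j-1}$ inside one class, on which at most $2^{j-1}$ colors occur, and apply the inductive hypothesis). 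Since $2^j\geq k$ for $j$ large, it suffices to treat $k=2$.

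For the arrowing with $k=2$ I would appeal directly to the Hales--Jewett theorem. Fix $F$ with vertex set $[t]$, let $N$ be a Hales--Jewett number for alphabet size $t$ and two colors, and put $V(G)=[t]^N$. Join $x$ to $y$ exactly when, setting $W:=\{w:x_w\neq y_w\}\neq\varnothing$, the word $x$ is constant on $W$ with value $i$, the word $y$ is constant on $W$ with value $j$, and $i\sim_F j$. One checks that for every combinatorial line in $[t]^N$ the natural bijection between its $t$ points and $[t]$ is a graph isomorphism of $F$ onto the subgraph of $G$ induced by those points; hence a monochromatic combinatorial line, which Hales--Jewett provides for every $2$-coloring of $V(G)$, yields a monochromatic copy of $F$, so $G$ satisfies (b).

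The real obstacle is clause (a): the graph just built need not be $K_{\delta(F)+1}$-free. Examining adjacency coordinate by coordinate, a clique of $G$ on a vertex set $S$ induces, for each coordinate $w$, a partition of $S$ into at most $\delta(F)$ parts, with no two distinct vertices of $S$ lying together in all of these partitions --- which only bounds $|S|$ by $\delta(F)^N$. To force (a) one must thin the vertex set, or replace $[t]^N$ by a sparser parameter space that still carries a Hales--Jewett-type partition theorem, so as to destroy every $K_{\delta(F)+1}$ without losing the line structure the argument above relies on; this is the real substance of Folkman's theorem. When $F=K_m$ the clique bound can instead be secured probabilistically: a random graph on $n$ vertices in which each edge appears independently with probability $n^{-2/(m+1)-\varepsilon}$, $\varepsilon>0$ small, has with high probability only $o(n)$ copies of $K_{m+1}$ while retaining the property that every set of linearly many vertices contains a $K_m$, so deleting one vertex from each $K_{m+1}$ leaves a $K_{m+1}$-free graph that still arrows $(K_m)^1_2$. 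For a general $F$ with $\delta(F)=m$ --- especially a dense one, which cannot be embedded in a single random graph of such low density --- this clique control is the crux; it can be supplied either by reducing to the complete case or by a direct construction, such as Folkman's original recursion or a partite amalgamation in the style of Ne\v{s}et\v{r}il and R\"odl, and carrying that out is where I expect the bulk of the difficulty to lie.
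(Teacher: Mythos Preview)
The paper does not prove this theorem: it is quoted as Theorem~2 of \cite{F70} and invoked only as a black box to obtain \thref{fact1}. There is therefore no proof in the paper against which to compare your attempt.

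On its own terms, your proposal is an outline rather than a proof. The reduction from arbitrary $k$ to $k=2$ by iteration is correct, and the Hales--Jewett construction you give does yield a graph $G$ with $G\to(F)^1_2$, since each combinatorial line carries a canonical induced copy of $F$. But you yourself identify that this $G$ need not satisfy clause~(a), and you do not actually supply the clique-control step---you only name places it might come from (Folkman's original recursion, Ne\v{s}et\v{r}il--R\"odl partite amalgamation, or a probabilistic deletion in the special case $F=K_m$). Since the coexistence of (a) and (b) is precisely what distinguishes Folkman's theorem from the triviality that sufficiently large graphs arrow any fixed $F$, the step you defer is the entire content of the result. If you want a self-contained argument, the partite-amalgamation method is the cleanest modern route and secures both clauses simultaneously; the Hales--Jewett construction alone does not.
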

Part (a) implies that $G$ is $K_n$-free if $F$ is.

\begin{lem}\thlabel{fact1} For each $n$ and $k$, there is a $G\in\T_n$ which is
  not an induced subgraph of $\bigcup_{i=1}^k H_n(x_i)$ for any vertices $x_1,
  \dotsc,x_k\in H_n$. In particular, no finite union of neighbor sets in $H_n$
  can contain an isomorphic copy of $H_n$.
\end{lem}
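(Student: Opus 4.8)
The plan is to use Folkman's theorem to build a graph $G$ that "defeats" every $k$-fold union of neighbor sets. The key observation is that if $x_1,\dots,x_k \in H_n$, then each vertex $v \in \bigcup_{i=1}^k H_n(x_i)$ is assigned a nonempty set $S(v) \subseteq \{1,\dots,k\}$ recording which $x_i$ it is adjacent to; this induces a coloring of the vertices of any induced subgraph $G \subseteq \bigcup_i H_n(x_i)$ by at most $2^k - 1$ colors. The crucial constraint is that a color class $\{v : i \in S(v)\}$ (the set of neighbors of $x_i$ within $G$) must be $K_{n-1}$-free: since $H_n$ is $K_n$-free, the neighbors of any single vertex induce a $K_{n-1}$-free graph. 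So it suffices to produce a $K_n$-free finite connected graph $G$ that cannot be partitioned into $2^k - 1$ pieces each of which is $K_{n-1}$-free.

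First I would apply Folkman's theorem with $F = K_{n-1}$ and with $2^k - 1$ colors: this yields a finite graph $G_0$ with $\delta(G_0) = \delta(K_{n-1}) = n-1$ — so $G_0$ is $K_n$-free by the remark after Folkman's theorem — such that every partition of $V(G_0)$ into $2^k-1$ classes has some class containing a copy of $K_{n-1}$. Second, I would pass to a connected component: since $K_{n-1}$ is connected, if $G_0$ has the partition property then at least one connected component of $G_0$ does as well (a partition of the whole splits into partitions of each component, and a copy of $K_{n-1}$ lives inside a single component). Take $G$ to be such a component; then $G \in \T_n$. Third, suppose toward a contradiction that $G$ embeds as an induced subgraph of $\bigcup_{i=1}^k H_n(x_i)$. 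Color each $v \in G$ by the set $S(v) = \{i \le k : v \sim x_i\}$, which is nonempty and avoids the "color" $\emptyset$, giving a partition into at most $2^k - 1$ classes. By the Folkman property some class contains a copy of $K_{n-1}$, say the class with $i \in S(v)$ for all $v$ in that copy. But then those $n-1$ vertices, together with $x_i$, form a copy of $K_n$ in $H_n$, contradicting that $H_n$ is $K_n$-free. Hence no such embedding exists, proving the first assertion.

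For the "in particular" clause: a copy of $H_n$ inside $\bigcup_{i=1}^k H_n(x_i)$ would, since $H_n$ is universal for finite $K_n$-free graphs, contain an induced copy of the $G$ just constructed — contradicting what we just proved. (Alternatively one notes $H_n$ itself is $K_n$-free, connected, and contains $G$ as an induced subgraph because $G \in \T_n$ and $H_n$ is universal.)

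The main obstacle I anticipate is purely bookkeeping: making sure the number of colors is exactly right. One must be careful that $S(v)$ is always nonempty (true, since $v$ lies in \emph{some} $H_n(x_i)$ by hypothesis) so that only $2^k - 1$ rather than $2^k$ color classes occur — though in fact applying Folkman with $2^k$ colors would work just as well, so this is not a real difficulty. The one genuinely substantive point is the reduction to a \emph{connected} $G$, which is needed so that $G \in \T_n$ as defined; this goes through cleanly because $K_{n-1}$ is connected. Everything else is a direct application of Folkman's theorem together with the defining $K_n$-freeness of $H_n$.
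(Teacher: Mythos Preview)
Your argument is correct and follows the same core idea as the paper --- apply Folkman with $F=K_{n-1}$, then use that neighbor sets in $H_n$ are $K_{n-1}$-free --- but you take a small detour. You color by the full set $S(v)\subseteq\{1,\dots,k\}$ and hence invoke Folkman with $2^k-1$ colors; the paper simply applies Folkman with $k$ colors, since if $G\subseteq\bigcup_{i=1}^k H_n(x_i)$ one can assign each $v$ a single index $i$ with $v\in H_n(x_i)$, yielding a partition into $k$ pieces each lying inside a single neighbor set. That direct partition makes the ``$K_{n-1}$ forces a $K_n$ with $x_i$'' step immediate and avoids the bookkeeping you flagged. On the other hand, your explicit passage to a connected component (needed so that $G\in\T_n$) is a point the paper's proof glosses over, and your justification via the connectedness of $K_{n-1}$ is exactly right.
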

\begin{proof} By applying \thref{Folkman} with $F=K_{n-1}$, there is a
  $K_n$-free $G$ such that for every partition of $G$ into $k$ sets, at least
  one set contains a $K_{n-1}$. Since a neighbor set in $H_n$ cannot contain a
  $K_{n-1}$, this means that $G$ is not contained in any union of $k$ neighbor
  sets. \qedhere
\end{proof}
Note that the graph $G$ can be found computably from $n$ and $k$ by a brute-force
search. The next fact is a restatement of Lemma~1 of \cite{ElZS89}:

\begin{lem}[El-Zahar \& Sauer]\thlabel{fact2} Let $\Delta$ be a finite induced
  subgraph of $H_n$ with $d$ vertices. Let $\Gamma$ be any member of $\T_n$ with
  $d+1$ vertices put in increasing order such that $\Delta \simeq^\ast
  \Gamma\restr d$. Then there are infinitely many choices of $x \in H_n$ such
  that $\Delta \cup \{x\} \simeq^\ast \Gamma$.
\end{lem}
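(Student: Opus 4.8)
The plan is to reduce the statement to a single application of the extension property that characterizes $H_n$. Write $\Delta=\{a_1<\cdots<a_d\}$ and $\Gamma=\{v_1<\cdots<v_{d+1}\}$, so that the hypothesis $\Delta\simeq^\ast\Gamma\restr d$ means exactly that $a_i\mapsto v_i$ is an isomorphism of induced subgraphs for $i\le d$. Observe that if we can find a vertex $x\in H_n$ with $x>a_d$ whose neighbours inside $\Delta$ are precisely the $a_i$ for which $v_i$ is adjacent to $v_{d+1}$ in $\Gamma$, then extending the isomorphism by $x\mapsto v_{d+1}$ exhibits $\Delta\cup\{x\}\simeq^\ast\Gamma$ — here it matters that $x$ is the largest vertex of $\Delta\cup\{x\}$, matching $v_{d+1}$ being the largest of $\Gamma$. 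So the whole task is to produce infinitely many such $x$.

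Accordingly I would set $A=\{\,a_i : i\le d,\ v_i\text{ adjacent to }v_{d+1}\text{ in }\Gamma\,\}$ and $B=\Delta\setminus A$. Before invoking the extension property on the disjoint pair $(A,B)$, the one thing that needs checking is that $A$ contains no copy of $K_{n-1}$; this is the only place the hypothesis $\Gamma\in\T_n$ enters. Indeed, $A$ is isomorphic, via the restriction of $a_i\mapsto v_i$, to the set $N$ of neighbours of $v_{d+1}$ among $v_1,\dots,v_d$, and were $N$ to contain a $K_{n-1}$ then $N\cup\{v_{d+1}\}$ would be a $K_n$ in $\Gamma$, contradicting $K_n$-freeness. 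Hence $A$ is $K_{n-1}$-free.

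Now the extension property yields a vertex $x\notin A\cup B=\Delta$ adjacent to every vertex of $A$ and to no vertex of $B$. The set $S$ of all such vertices is in fact infinite: given any finite $S_0\subseteq S$, applying the extension property to the pair $(A,\,B\cup S_0)$ produces a member of $S$ outside $S_0$. Since only finitely many vertices are $\le a_d$, infinitely many $x\in S$ satisfy $x>a_d$, and by the first paragraph each of them gives $\Delta\cup\{x\}\simeq^\ast\Gamma$, as required. I do not anticipate a genuine obstacle: the only substantive point is the $K_{n-1}$-freeness of the neighbour-set $A$ in the second paragraph, and the only bookkeeping point is ensuring the witness $x$ is chosen above all of $\Delta$ so that the order-isomorphism is the right one.
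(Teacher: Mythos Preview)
Your argument is correct. The paper itself does not supply a proof of this lemma; it merely records it as a restatement of Lemma~1 of El-Zahar and Sauer and cites their paper. Your proof via the defining extension property of $H_n$ is exactly the expected one: identify the required neighbour pattern of the new vertex, verify that the ``attach'' set $A$ is $K_{n-1}$-free (which follows from $K_n$-freeness of $\Gamma$), and iterate the extension property with an enlarging ``avoid'' set to obtain infinitely many witnesses above $a_d$. Note that you only use the $K_n$-freeness part of the hypothesis $\Gamma\in\T_n$; the connectivity clause in the definition of $\T_n$ is irrelevant here.
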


\begin{proof}[Proof of \thref{Hn-noce}] The proof is by a finite injury priority
  argument. We build a computable $c\colon H_n\to 2$, viewing $2$ as the set
  $\{R,B\}$ (red and blue), to meet requirements
  \begin{align*}  
    R_e\colon \paren{ \abs{W_e}=\infty \land \abs{c(W_e)}=1} \implies
    \text{\thref{fact2} fails if $H_n$ is replaced with $W_e \subset H_n$}.
  \end{align*}
  These are given the priority order $R_0>R_1>R_2>\cdots$. We also define a
  computable function $p$ in stages, where $p(x,s)$ is the planned color of
  vertex $x$ at stage $s$, beginning with $p(x,0)=R$ (red) for all $x$. This
  function will be used to keep track of vertices which requirements ``reserve''
  to be a certain color. Only one vertex will actually be colored at each stage,
  starting with $c(0)=R$.

  A requirement $R_e$ is said to be active at stage $s$ if $e\leq s$ and
  $W_{e,s}$ contains at least one element that was enumerated after the most
  recent stage in which $R_e$ was injured (to be explained below). If $R_e$ was
  never injured, we say it is active simply if $W_{e,s}\neq \emptyset$. Each
  requirement $R_e$ will amass a finite list of vertices $\{x_e^1, x_e^2,
  \dotsc, x_e^\ell\}$ in $W_e$ as its followers, together with a target graph
  $\Gamma_e$ (also explained below). When a follower $x_e^m$ is added, $R_e$
  will set the function $p(x,s)$ for some vertices $x\in H_n(x_e^m)$; we say
  $R_e$ \bdef{reserves} $x$ when it sets $p(x,s)$. Weaker requirements cannot
  reserve vertices which are currently reserved by stronger requirements. The
  followers, target graph, and reservations of $R_e$ are canceled when $R_e$ is
  injured by a stronger requirement. (Canceling a reserved vertex just means the
  vertex is no longer considered to be reserved by $R_e$, and does not change
  the values of $p$ or $c$.) We may as well assume each $W_e$ is monochromatic,
  and will refer to $R_e$ as either a red or blue requirement accordingly.

  We now detail the construction, and afterwards show that all requirements
  are injured at most finitely often and are met. First, if no $R_e$ is active
  at stage $s+1$ for $e\leq s$, set $p(x,s+1) = p(x,s)$ for all $x$, set
  $c(s+1)=p(s+1,s+1)$, and end the stage. If a requirement $R_e$ is already
  active at stage $s+1$ and has no follower, give it a follower $x_e^1$ which is
  any element of $W_{e,s}$ that was enumerated after the stage in which $R_e$
  was last injured, or otherwise any element of $W_{e,s}$ if $R_e$ was never
  injured. Then for every $y\in H_n(x_e^1)$ which is not currently reserved by a
  stronger requirement and has not yet been colored, reserve $y$ by setting
  $p(y,s+1)$ to be the opposite color as $c(x_e^1)$.

  If $R_e$ is active and has a follower at stage $s+1$ but no target graph, let
  its target graph be some $\Gamma_e\in \T_n$ which cannot be contained in $k+1$
  neighbor sets, where $k$ is the total number of all followers of stronger
  currently active requirements. Such a $\Gamma_e$ may be furnished by
  \thref{fact1}. Order $\Gamma_e$ in such a way that each vertex (except the
  first) is connected to at least one previous vertex. 

  Next, suppose that at least one requirement is active and has a follower and
  target graph at stage $s+1$. Go through the following procedure for each such
  $R_e$ in order from strongest to weakest. Let $m$ be the number of followers
  of $R_e$ at stage $s$; we will at this point have $\{x_e^1,\dotsc, x_e^m\}
  \simeq^\ast \Gamma_e \restr m$. Suppose there is some $x\in W_{e,s+1}$ with
  $x$ greater than the stage at which $x_e^m$ was enumerated into $W_e$, and
  such that $\{x_e^1,\dotsc, x_e^m, x\} \simeq^\ast \Gamma_e\restr (m+1)$. If
  so, then give $R_e$ the new follower $x_e^{m+1} = x$, and for all $y\in
  H_n(x_e^{m+1})$ with $y>s+1$ such that $y$ is not currently reserved by any
  stronger requirement, have $R_e$ reserve $p(y,s+1) = R$ if $R_e$ is blue, or
  $p(y,s+1)=B$ if $R_e$ is red. Injure all weaker requirements by canceling
  their followers, target graphs, and reservations. After this is done for all
  active $R_e$, end the stage by making $p(z,s+1)=p(z,s)$ for any $z$ for which
  $p(z,\cdot)$ was not modified earlier in the stage, and then letting
  $c(s+1)=p(s+1,s+1)$. If instead no $x$ as above was found for any active
  $R_e$, then set $p(x,s+1) = p(x,s)$ for all $x$, set $c(s+1)=p(s+1,s+1)$, and
  end the stage. This completes the construction.

  Each requirement only need accumulate a finite list of followers, so in
  particular $R_0$ will only injure other requirements finitely many times.
  After the last time a requirement is injured, it only injures weaker
  requirements finitely often, so inductively we have that every requirement is
  only injured finitely many times before acquiring its final list of followers
  and target graph. And each requirement is satisfied: suppose (without loss of
  generality) $R_e$ is blue. For each $i\geq 2$, the vertex $x_e^i$ is an
  element of $H_n(x_e^j)$ for some $j<i$, by assumption on how we have ordered
  $\Gamma_e$. If $x_e^j$ was enumerated into $W_e$ at stage $s$, then when this
  $x_e^j$ was chosen as a follower, $R_e$ reserved every element of $H_n(x_e^j)$
  greater than $s$ by making its planned color red---except for those vertices
  which were already reserved (to be blue) by stronger (red) requirements.
  Therefore, if $x_e^i$ is blue, then since in particular the construction
  requires $x_e^i > s$, we must have $x_e^i$ a neighbor of some follower of a
  stronger (red) requirement. (We asked for $x_e^i$ to be greater than the stage
  $t$ at which $x_e^{i-1}$ was enumerated. Such an $x_e^i$ can be found for any
  $t$ by \thref{fact2}.) So this copy we are building of $\Gamma_e$ inside $W_e$
  is contained entirely in a union of neighbor sets of followers of stronger
  active requirements, except possibly for $x_e^1$ which may lie outside of any
  such neighbor set. If $R_e$ is never injured again, then the number $k$ of
  such followers never changes again; it is the same as it was when the target
  graph $\Gamma_e$ was chosen not to fit inside $k+1$ neighbor sets. The latter
  number is large enough to also cover $x_e^1$, so that this copy of $\Gamma_e$
  can never be completed inside $W_e$, implying \thref{fact2} fails in $W_e$.
  \qedhere
  
\end{proof}

\noindent\textbf{Acknowledgements:} This research was supported in part by NSF
grant DMS-1854107. I am extremely grateful to my thesis advisors Linda Brown
Westrick and Jan Reimann for their invaluable help, and also to Peter Cholak for
his comments on an earlier version of the proof of \thref{Hn-noce}.

\printbibliography

\end{document}